\theoremstyle{plain}
 \newtheorem{thm}{Theorem}[section]
\newtheorem{thm*}{Theorem}
 \newtheorem{prop}[thm]{Proposition}
 \numberwithin{equation}{section} 
\numberwithin{figure}{section} 
 \theoremstyle{plain}
 \theoremstyle{definition}
 \newtheorem{defn}[thm]{Definition}
\newcommand{\C}{{{\mathbb C}}}
\newcommand{\R}{{{\mathbb R}}}
\newcommand{\bH}{{{\bf H}}}
\newcommand{\bp}{{{\bf p}}}
\newcommand{\fp}{{{\mathfrak p}}}
\newcommand{\h}{{{\mathbb H}}}
\newcommand{\fX}{{{\mathfrak X}}}
\newcommand{\X}{{{\mathbb X}}}
\newcommand{\fH}{{{\mathfrak H}}}
\newcommand{\binfty}{{{\bf \infty}}}
\begin{document}

\title[Ptoleaean Inequality]{Complex cross--ratios and the ptolemaean inequality}

\author[I.D. Platis]{Ioannis D. Platis}

\begin{abstract}
We use Kor\'anyi--Reimann complex cross--ratios to prove the Ptolemaean inequality
and the Theorem of Ptolemaeus in the setting of the boundary of complex hyperbolic space and
the first Heisenberg group.
  
\end{abstract}

\address{Department of Mathematics, Univerity of Crete, Greece}
\email{jplatis@math.uoc.gr}
\date{12 May 2012}

\maketitle

\section{Introduction}

The Theorem of Ptolemaeus in planar Euclidean geometry states that the product of the euclidean lengths of the diagonals of an inscribed quadrilateral equals to the sum of the products of the euclidean lengths of its opposite sides. When one vertex of the quadrilateral does not lie on the circle passing from the other three verices, then we have inequality, known as the Ptolemaean inequality.

The Ptolemaean inequality and its generalisation to various spaces has recently been the study of many authors, for example see the innovative paper of S. Buyalo and V. Schroeder, \cite{BS} and the work of S.M. Buckley, K. Falk and D.J. Wraith in CAT(0) spaces, \cite{BFW}. In the present paper we use the Kor\'anyi--Reimann complex cross--ratios to give a proof of the Ptolemaean Inequality and the Theorem of Ptolemaeus in the boundary of complex hyperbolic space $\partial\bH^2_\C$ (Theorem \ref{thm:ptolemaean-ineq}) and accordingly in the Heisenberg group (Theorem \ref{thm:ptolemaean-heis}).

\section{Preliminaries}

\subsection{Complex Hyperbolic Space}\label{sec:chs}

Let $\mathbb{C}^{2,1}$ be the vector space $\mathbb{C}^{3}$ with
the Hermitian form of signature $(2,1)$ given by
$$
\left\langle {\bf {z}},{\bf {w}}\right\rangle 
={\bf w}^*J{\bf z}
=z_{1}\overline{w}_{3}+z_{2}\overline{w}_{2}+z_{2}\overline{w}_{1}
$$
with matrix 
$$
J=\left[\begin{array}{ccc}
0 & 0 & 1\\
0 & 1 & 0\\
1 & 0 & 0\end{array}\right].
$$
We consider the following subspaces of ${\mathbb C}^{2,1}$:
\begin{eqnarray*}
V_- & = & \Bigl\{{\bf z}\in{\mathbb C}^{2,1}\ :\ 
\langle{\bf z},\,{\bf z} \rangle<0\Bigr\}, \\
V_0 & = & \Bigl\{{\bf z}\in{\mathbb C}^{2,1}-\{{\bf 0}\}\ :\ 
\langle{\bf z},\,{\bf z} \rangle=0\Bigr\}.
\end{eqnarray*}

Let ${\mathbb P}:{\mathbb C}^{2,1}-\{0\}\longrightarrow {\mathbb C}P^2$ be
the canonical projection onto complex projective space. Then 
{\sl complex hyperbolic space} ${\bf H}_{\mathbb{C}}^{2}$
is defined to be ${\mathbb P}V_-$ and its boundary
$\partial{\bf H}^2_{\mathbb C}$ is ${\mathbb P}V_0$.
Specifically, ${\mathbb C}^{2,1}-\{{\bf 0}\}$ may be covered with three charts
$H_1$, $H_2$, $H_3$ where $H_j$ comprises those points in 
${\mathbb C}^{2,1}-\{{\bf 0}\}$ for which $z_j\neq 0$. It is clear that
$V_-$ is contained in $H_3$. The canonical projection
from $H_3$ to ${\mathbb C}^2$ is given by 
${\mathbb P}({\bf z})=(z_1/z_3,\,z_2/z_3)=z$. Therefore we can write
${\bf H}^2_{\mathbb C}={\mathbb P}(V_-)$ as
$$
{\bf H}^2_{\mathbb C} = \left\{ (z_1,\,z_2)\in{\mathbb C}^2
\ : \ 2\Re(z_1)+|z_2|^2<0\right\}.
$$
There are distinguished points in $V_0$ which we denote by 
${\bf o}$ and $\binfty$:
$$
{\bf o}=\left[\begin{matrix} 0 \\ 0 \\ 1 \end{matrix}\right], \quad
\binfty=\left[\begin{matrix} 1 \\ 0 \\ 0 \end{matrix}\right].
$$
Then $V_0-\{{\binfty}\}$ is contained in $H_3$ and 
$V_0-\{{\bf o}\}$ (in particular $\infty$) is contained
in $H_1$. Let ${\mathbb P}{\bf o}=o$ and ${\mathbb P}\binfty=\infty$. 
Then we can write $\partial{\bf H}^2_{\mathbb C}={\mathbb P}(V_0)$ as
$$
\partial{\bf H}^2_{\mathbb C}-\{\infty\} 
=\left\{ (z_1,\,z_2)\in{\mathbb C}^2
\ : \ 2\Re(z_1)+|z_2|^2=1\right\}.
$$
In particular $o=(0,0)\in\C^2$. 
In this manner, ${\bf H}^2_{\mathbb C}$ is the Siegel domain in 
${\mathbb C}^2$; see \cite{Gol}.

Conversely, given a point $z$ of 
${\mathbb C}^2={\mathbb P}(H_3)\subset{\mathbb C}P^2$ we may 
lift $z=(z_1,z_2)$ to a point ${\bf z}$ in $H_3\subset{\mathbb C}^{2,1}$, 
called the {\sl standard lift} of $z$, by writing ${\bf z}$ in non-homogeneous
coordinates as
$$
{\bf z}=\left[\begin{matrix} z_1 \\ z_2 \\ 1 \end{matrix}\right].
$$

The {\sl Bergman metric} on  ${\bf H}_{\mathbb{C}}^{2}$ is defined by the
distance function $\rho$ given by the formula
$$
\cosh^{2}\left(\frac{\rho(z,w)}{2}\right)
=\frac{\left\langle {\bf {z}},{\bf {w}}\right\rangle 
\left\langle {\bf {w}},{\bf {z}}\right\rangle }
{\left\langle {\bf {z}},{\bf {z}}\right\rangle 
\left\langle {\bf {w}},{\bf {w}}\right\rangle }
=\frac{\bigl|\langle {\bf z},{\bf w}\rangle\bigr|^2}
{|{\bf z}|^2|{\bf w}|^2}
$$
where ${\bf z}$ and ${\bf w}$ in $V_-$ are the standard lifts of $z$ and $w$ 
in ${\bf H}^2_{\mathbb C}$ and
$|{\bf z}|=\sqrt{-\langle{\bf z},{\bf z}\rangle}$.
Alternatively,
$$
ds^{2}=-\frac{4}{\left\langle {\bf {z}},{\bf {z}}\right\rangle ^{2}}
\det\left[\begin{array}{cc}
\left\langle {\bf {z}},{\bf {z}}\right\rangle  
& \left\langle d{\bf {z}},{\bf {z}}\right\rangle \\
\left\langle {\bf {z}},d{\bf {z}}\right\rangle  
& \left\langle d{\bf {z}},d{\bf {z}}\right\rangle \end{array}\right].
$$
The holomorphic sectional curvature of 
$ {\bf H}_{\mathbb{C}}^{2}$
equals to $-1$ and its real sectional curvature
is pinched between $-1$ and $-1/4$.

\subsubsection{Isometries, complex lines, Lagrangian planes}

Let ${\rm U(2,1) }$ be the group
of unitary matrices for  the  Hermitian form 
$\left\langle \cdot,\cdot\right\rangle $. Each such matrix $A$ satisfies
the relation $A^{-1}=JA^{*}J$ where $A^{*}$ is the Hermitian transpose of $A$.

The full group of holomorphic isometries of 
complex hyperbolic space is the \textsl{projective
unitary group} ${\rm PU(2,1)}={\rm U(2,1)}/{\rm U(1)}$, where 
${\rm U(1)}=\{ e^{i\theta}I,\theta\in[0,2\pi)\}$
and $I$ is the $3\times3$ identity matrix. 
We find sometimes  convenient to consider instead the group ${\rm SU(2,1)}$
of matrices which are unitary with respect to 
$\left\langle \cdot,\cdot\right\rangle $,
and have determinant $1$. 
Therefore ${\rm PU(2,1)}={\rm SU(2,1)}/\{ I,\omega I,\omega^{2}I\}$,
where $\omega$ is a non real cube root of unity, and so ${\rm SU}(2,1)$
is a 3-fold covering of ${\rm PU}(2,1)$. 

A complex line is an isometric image of the embedding of $\bH^1_\C$ into $\bH^2_\C$. A Lagrangian plane is an isometric image of $\bH^2_\R$ into $\bH^2_\C$.

\subsection{The boundary--Heisenberg group}\label{sec:boundary}

A finite point $z$ is in the boundary of the Siegel domain if its standard
lift to $\C^{2,1}$ is ${\bf z}$ where
$$
{\bf z}=\left[\begin{matrix} z_1 \\ z_2 \\ 1 \end{matrix}\right]
\quad \text{ where }\quad z_1 + \bar{z}_{1} + |z_2|^2 = 0.
$$
We write $z=z_2/\sqrt{2}\in\C$ and this 
condition becomes $2\Re(z_1)=-2|z|^2$. Hence we may
write $z_1=-|z|^2+it$ for $t\in\R$. That is for $z\in\C$ and
$t\in\R$:
$$
{\bf z}
=\left[\begin{matrix} -|z|^2+it \\ \sqrt{2}z\\ 1\end{matrix}\right]
$$
Therefore we may identify the boundary of the Siegel domain with
the one point compactification of $\C\times\R$.

The action of the stabiliser of infinity ${\rm Stab}(\infty)$ gives to the set of these points the structure of a non Abelian group. This is the Heisenberg group $\fH$ which is $\C\times\R$ with group law
$$
(z,t)*(w,s)=(z+w,t+s+2\Im(\overline{w}z)).
$$

The Heisenberg norm (Kor\'anyi gauge) is given by
$$
\left|(z,t)\right|=\left| |z|^2-it\right|^{1/2}.
$$
From this norm we obtain  a metric, the Kor\'anyi--Cygan (K--C) metric, on $\fH$ by the relation
$$
d_K\left((z_1,t_1),\,(z_2,t_2)\right)
=\left|(z_1,t_1)^{-1}*(z_2,t_2)\right|.
$$
Or, in other words
$$
d_K\left((z_1,t_1),\,(z_2,t_2)\right)
=\left| |z_1-z_2|^2-it_1+it_2-2i\Im(z_1\bar{z}_2)\right|^{1/2}.
$$
By taking the standard lift of points on $\partial{\bf H}^2_\C-\{\infty\}$ 
to $\C^{2,1}$ we can write the K--C metric as:
$$
d_K\left((z_1,t_1),\,(z_2,t_2)\right)
=\left|\left\langle\left[ \begin{matrix}
-|z_1|^2+it_1 \\ \sqrt{2}z_1 \\ 1 \end{matrix}\right],\,
\left[ \begin{matrix}
-|z_2|^2+it_2 \\ \sqrt{2}z_2 \\ 1 \end{matrix}\right]
\right\rangle\right|^{1/2}.
$$
The K--C metric is invariant under 
\begin{enumerate}
 \item the left action of $\fH$, $(z,t)\to(\zeta,s)*(z,t)$;
\item the Heisenberg translations $(z,t)\mapsto (z,t+s)$, $s\in\R$;
\item the rotations about the vertical axis $(z,t)\mapsto(ze^{i\phi},t)$, $\phi\in\R$.
\end{enumerate}
The group ${\rm Isom}(\fH,d_K)$ of {\it Heisenberg isometries}, is thus represented by the group consisting of matrices of the form
\begin{equation}
 \left[\begin{matrix}
        1&-\sqrt{2}\zeta e^{i\phi}&-|\zeta|^2+is\\
0&e^{i\phi}&\sqrt{2}\zeta\\
0&0&1
       \end{matrix}\right].
\end{equation}

The K--C metric is also scaled up to multiplicative constants by the action of Heisenberg dilations $(z,t)\mapsto$ $(rz,r^2t)$, $r\in\R_*$. The group
$$
{\rm Sim}(\fH,d_K)=\R\times {\rm U}(1)\times\fH
$$
acting on $\fH$ is called the group of Heisenberg similarities.

\subsubsection{$\R-$circles and $\C-$circles}

$\R-$circles are boundaries of Lagrangian planes and $\C-$circles are boundaries of complex lines. They come in two flavours, infinite ones (i.e. containing the point at infinity) and finite ones. We refer to \cite{Gol} for more more details about these curves.

\subsubsection{Cross--ratios}
Given a  quadruple of distinct points $\fp=(p_1,p_2,p_3,p_4)$ in $\partial\bH_\C^2$,  their complex  cross--ratio as defined by Kor\'anyi and Reimann in \cite{KR1} is 
$$
\X(p_1,p_2,p_3,p_4)=\frac{\langle \bp_3,\bp_1\rangle \langle \bp_4,\bp_2\rangle}{\langle \bp_4,\bp_1\rangle \langle \bp_3,\bp_2\rangle},
$$
where $\bp_i$ are lifts of $p_i$, $i=1,2,3,4$. The cross--ratio is independent of the choice of lifts and remains invariant under the diagonal action of ${\rm PU}(2,1)$. We stress here that for points in the Heisenberg group, the square root of its absolute value is
\begin{equation*}
 |\X(p_1,p_2,p_3,p_4)|^{1/2}=\frac{d_K(p_4,p_2)\cdot d_K(p_3,p_1)}{d_K(p_4,p_1)\cdot d_K(p_3,p_2)}.
\end{equation*}

\subsection{Cross--ratio variety}\label{sec:X-variety}

Given a quadruple $\fp=(p_1,p_2,p_3,p_4)$ of distinct points in the boundary, all possible permutations of points gives us 24 complex cross--ratios corresponding to $\fp$. Due to  symmetries, see \cite{F}, Falbel showed that all cross--ratios corresponding to a quadruple of points depend on three cross--ratios which satisfy two real equations. The following proposition holds.

\medskip

\begin{prop}\label{prop:cross-ratio-equalities}
Let $\fp=(p_1,p_2,p_3,p_4)$ be any quadruple of distinct points in $\partial \bH^2_\C$. Let
$$
\X_1(\fp)=\X(p_1,p_2,p_3,p_4),\quad \X_2(\fp)=\X(p_1,p_3,p_2,p_4),\quad \X_3(\fp)=\X(p_2,p_3,p_1,p_4).
$$
Then
\begin{eqnarray}\label{eq:cross1}
 &&
|\X_2|=|\X_1||\X_3|,\\
&&\label{eq:cross2}
2|\X_1|^2\Re(\X_3)=|\X_1|^2+|\X_2|^2-2\Re(\X_1+\X_2)+1.
\end{eqnarray}
\end{prop}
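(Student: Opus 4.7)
\medskip

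\noindent\textbf{Proof proposal.} Write $a_{ij}=\langle\bp_i,\bp_j\rangle$. Since the $p_i$ lie on $\partial\bH^2_\C$, we have $a_{ii}=0$, and the Hermitian property of $\langle\cdot,\cdot\rangle$ gives $a_{ji}=\overline{a_{ij}}$, so in particular $|a_{ij}|=|a_{ji}|$. In this notation,
$$
\X_1=\frac{a_{31}a_{42}}{a_{41}a_{32}},\qquad \X_2=\frac{a_{21}a_{43}}{a_{41}a_{23}},\qquad \X_3=\frac{a_{12}a_{43}}{a_{42}a_{13}}.
$$
The cross--ratio is independent of the choice of lifts, so we may use any lifts we like when computing.

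Equation \eqref{eq:cross1} is essentially formal: taking absolute values and applying $|a_{ij}|=|a_{ji}|$ to the expressions above, one checks directly that $|\X_1|\,|\X_3|$ and $|\X_2|$ reduce to the same product of $|a_{ij}|$'s. I would dispose of \eqref{eq:cross1} in a single line.

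For \eqref{eq:cross2}, the key observation is that the four lifts $\bp_1,\dots,\bp_4$ lie in the three--dimensional space $\C^{2,1}$, hence they are linearly dependent and their $4\times 4$ Gram matrix $(a_{ij})$ has vanishing determinant. Because the diagonal entries vanish, only derangements of $\{1,2,3,4\}$ contribute to the expansion of $\det(a_{ij})=0$. The three double transpositions yield the real terms $|a_{12}|^2|a_{34}|^2$, $|a_{13}|^2|a_{24}|^2$, $|a_{14}|^2|a_{23}|^2$ with sign $+1$, and the six $4$--cycles pair up into complex conjugates (a cycle and its reverse), producing
$$
|a_{12}|^2|a_{34}|^2+|a_{13}|^2|a_{24}|^2+|a_{14}|^2|a_{23}|^2
=2\Re(a_{12}a_{23}a_{34}a_{41})+2\Re(a_{13}a_{24}a_{32}a_{41})+2\Re(a_{12}a_{24}a_{43}a_{31}).
$$

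The next step is to divide this identity by $|a_{14}|^2|a_{23}|^2=a_{14}a_{41}\,a_{23}a_{32}$ and identify each ratio with one of $|\X_1|^2,|\X_2|^2,\Re(\X_1),\Re(\X_2),|\X_1|^2\Re(\X_3)$. Directly:
$$
\frac{|a_{12}|^2|a_{34}|^2}{|a_{14}|^2|a_{23}|^2}=|\X_2|^2,\qquad
\frac{|a_{13}|^2|a_{24}|^2}{|a_{14}|^2|a_{23}|^2}=|\X_1|^2,
$$
and a short check gives
$$
\frac{a_{12}a_{23}a_{34}a_{41}}{|a_{14}|^2|a_{23}|^2}=\overline{\X_2},\qquad
\frac{a_{13}a_{24}a_{32}a_{41}}{|a_{14}|^2|a_{23}|^2}=\overline{\X_1},\qquad
\frac{a_{12}a_{24}a_{43}a_{31}}{|a_{14}|^2|a_{23}|^2}=|\X_1|^2\,\X_3,
$$
so their real parts yield $\Re(\X_2),\Re(\X_1),|\X_1|^2\Re(\X_3)$ respectively. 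Substituting into the divided Gram identity gives exactly \eqref{eq:cross2}.

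The main obstacle is bookkeeping: expanding a $4\times 4$ determinant with the right signs, pairing conjugate cycles, and recognizing each quartic monomial in the $a_{ij}$ as a specific cross--ratio multiplied by $|a_{14}|^2|a_{23}|^2$. Once the dictionary between the six quartic monomials and $\X_1,\X_2,|\X_1|^2\X_3$ is set up, the identity is a one--line consequence of $\det(a_{ij})=0$.
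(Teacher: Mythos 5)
Your proposal is correct, and it checks out line by line: the identification of $\X_1,\X_2,\X_3$ with the quotients of the $a_{ij}$, the reduction of \eqref{eq:cross1} to $|a_{ij}|=|a_{ji}|$, the vanishing of the Gram determinant (four lifts in the three--dimensional space $\C^{2,1}$ are linearly dependent, and linear dependence forces a singular Gram matrix regardless of the signature of the form), the sign bookkeeping (the three double transpositions are even, the six $4$--cycles are odd and pair with their inverses into conjugate terms), and the dictionary $\frac{a_{12}a_{23}a_{34}a_{41}}{|a_{14}|^2|a_{23}|^2}=\overline{\X_2}$, $\frac{a_{13}a_{24}a_{32}a_{41}}{|a_{14}|^2|a_{23}|^2}=\overline{\X_1}$, $\frac{a_{12}a_{24}a_{43}a_{31}}{|a_{14}|^2|a_{23}|^2}=|\X_1|^2\X_3$ all verify, giving \eqref{eq:cross2} after dividing by $|a_{14}|^2|a_{23}|^2$.

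Your route is, however, genuinely different from the one the paper points to: the paper gives no proof at all but cites Parker--Platis \cite{PP}, where the identities are obtained by normalizing the quadruple by the diagonal ${\rm PU}(2,1)$--action into a standard position and computing the cross--ratios explicitly in Heisenberg coordinates. Your argument is coordinate--free and shorter: everything follows from a single algebraic fact, $\det(\langle\bp_i,\bp_j\rangle)=0$, expanded with zero diagonal. That makes the mechanism behind the two relations transparent (\eqref{eq:cross1} is pure Hermitian symmetry; \eqref{eq:cross2} is exactly the vanishing Gram determinant in disguise) and it adapts readily to other rank--one situations, which is relevant to the paper's footnote about the quaternionic case (there one must be careful with noncommutativity, so the determinant expansion needs reworking, but the real--part structure survives). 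What the normalization approach buys instead is explicit coordinates for the configuration, which \cite{PP} needs anyway for the equality/$\R$--circle statements used later in Theorem \ref{thm:ptolemaean-ineq}; your argument does not produce those. One small point you should make explicit: the division by $|a_{14}|^2|a_{23}|^2$ (and indeed the well--definedness of the $\X_i$) requires $\langle\bp_i,\bp_j\rangle\neq 0$ for $i\neq j$, which holds because two distinct null lines cannot be orthogonal --- a two--dimensional totally isotropic subspace is impossible in signature $(2,1)$. With that sentence added, your proof is complete and self--contained.
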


\medskip

For the proof, see for instance in \cite{PP}. The above equations define a 4--dimensional real subvariety of $\C^3$.\footnote{We note here that Equalities \ref{eq:cross1} and \ref{eq:cross2} also hold in the case of quadruples of distinct points in the boundary of the quaternionic hyperbolic space $\partial\bH_\h^2$. This is identified with the one point compactification of the quaternionic Heisenberg group, see  \cite{P-q}.}

\medskip

\begin{defn}\label{defn:cr-variety}
 The {\it cross--ratio variety} $\fX$ is the subset of $\C^3$ at which Eqs. \ref{eq:cross1} and \ref{eq:cross2} are satisfied.
\end{defn}

\medskip

\section{Ptolemaean Inequality and Ptolemaeus' Theorem}

In this section we prove an $S^3$ version of the Ptolemaean inequality and Ptolemaeus’ Theorem
respectively, which are derived almost immediately from the properties of complex cross--ratios.

\begin{thm}\label{thm:ptolemaean-ineq}{\bf (Ptolemaean inequality and Ptolemaeus' Theorem in $\partial\bH^2_\C)$}
Let $\fp=(p_1,p_2,$ $p_3,p_4)$ a quadruple of distinct points in $\partial \bH_\C^2$ and $\X_i=\X_i(\fp)$, $i=1,2,3,$  its corresponding complex cross--ratios.  Then  the following inequalities hold:
 \begin{equation}\label{eq:ptolemaean-inequality-cr}
  |\X_1|^{1/2}+|\X_2|^{1/2}\ge 1,\quad \text{and}\quad -1\le|\X_1|^{1/2}-|\X_2|^{1/2}\le 1.
 \end{equation}
Each inequality  \ref{eq:ptolemaean-inequality-cr} holds if and only if all four points of $\fp$ lie in an $\R-$circle. Then, $\X_i>0$, $i=1,2$ and 
\begin{enumerate}
 \item $\X_1^{1/2}-\X_2^{1/2}=1$ if $p_1$ and $p_3$ separate $p_2$ and $p_4$;
 \item $\X_2^{1/2}-\X_1^{1/2}=1$ if  $p_1$ and $p_2$ separate $p_3$ and $p_4$;
 \item $\X_1^{1/2}+\X_2^{1/2}=1$ if  $p_1$ and $p_4$ separate $p_2$ and $p_3$.
\end{enumerate}
\end{thm}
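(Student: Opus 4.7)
The plan is to extract both Ptolemaean inequalities from the single identity \eqref{eq:cross2} by bounding real parts by moduli. Substituting $|\X_3|=|\X_2|/|\X_1|$ from \eqref{eq:cross1}, the inequality $\Re(\X_3)\le|\X_3|$ bounds the right-hand side of \eqref{eq:cross2} above by $2|\X_1|\,|\X_2|$, while $-\Re(\X_1+\X_2)\ge -(|\X_1|+|\X_2|)$ bounds the left-hand side below by $|\X_1|^2+|\X_2|^2-2(|\X_1|+|\X_2|)+1$. Combining the two estimates gives
\[
(|\X_1|+|\X_2|-1)^2\le 4|\X_1|\,|\X_2|,
\]
equivalently $\bigl||\X_1|+|\X_2|-1\bigr|\le 2\sqrt{|\X_1|\,|\X_2|}$; unpacking the two signs produces both $(|\X_1|^{1/2}+|\X_2|^{1/2})^2\ge 1$ and $(|\X_1|^{1/2}-|\X_2|^{1/2})^2\le 1$, i.e.\ the two assertions of \eqref{eq:ptolemaean-inequality-cr}.

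For the equality statement, if either part of \eqref{eq:ptolemaean-inequality-cr} is sharp then so is the master inequality above, which forces both intermediate estimates to be equalities: $\Re(\X_3)=|\X_3|$ and $\Re(\X_i)=|\X_i|$ for $i=1,2$. Hence each $\X_i$ is a nonnegative real, and distinctness of the four points rules out vanishing, so $\X_i>0$ for $i=1,2,3$. I would then invoke the standard reality criterion for Kor\'anyi--Reimann cross-ratios---namely that the cross-ratio of four distinct points in $\partial\bH^2_\C$ is real precisely when they lie on a common $\R$-circle; see, for instance, \cite{F} or \cite{PP}---to conclude that equality holds if and only if $\fp$ lies on an $\R$-circle.

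To identify the three sub-cases (1)--(3), note that the equality $(|\X_1|+|\X_2|-1)^2=4|\X_1||\X_2|$ factors as $|\X_1|+|\X_2|-1=\pm 2\sqrt{|\X_1||\X_2|}$, so exactly one of $\X_1^{1/2}+\X_2^{1/2}=1$, $\X_1^{1/2}-\X_2^{1/2}=1$, or $\X_2^{1/2}-\X_1^{1/2}=1$ is forced. I would then apply the Heisenberg distance formula
\[
|\X(p_i,p_j,p_k,p_\ell)|^{1/2}=\frac{d_K(p_k,p_i)\,d_K(p_\ell,p_j)}{d_K(p_\ell,p_i)\,d_K(p_k,p_j)},
\]
which, after clearing the common denominator $d_K(p_4,p_1)d_K(p_3,p_2)$, converts each candidate equality into a Ptolemy-type identity $d_{ab}d_{cd}=d_{ac}d_{bd}+d_{ad}d_{bc}$ among the six distances $d_{ij}=d_K(p_i,p_j)$. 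Reducing to a normalised $\R$-circle via $\mathrm{PU}(2,1)$ (for example the real axis in $\fH$, on which $d_K$ restricts to the Euclidean distance), the classical Euclidean Ptolemy theorem picks out exactly the identity corresponding to the cyclic order of $p_1,\dots,p_4$ on the $\R$-circle, and the three possible cyclic orders match the three separating pairs in (1)--(3).

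The main obstacle is the bookkeeping in the last step: matching each separating pair to the correct sign pattern requires careful indexing in the distance formula, and the reduction of a general finite $\R$-circle to the Euclidean model rests on the fact that such a circle becomes a Euclidean circle in a suitable affine chart under a Heisenberg isometry. The reality criterion for cross-ratios used in the second paragraph is standard in the literature and can be cited directly.
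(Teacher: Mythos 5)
Your derivation of the two inequalities is correct and is essentially the paper's own argument: from Eq. \ref{eq:cross2}, the estimates $\Re(\X_3)\le|\X_3|=|\X_2|/|\X_1|$ and $\Re(\X_1+\X_2)\le|\X_1|+|\X_2|$ give $(|\X_1|+|\X_2|-1)^2\le 4|\X_1|\,|\X_2|$, which unpacks into both parts of \ref{eq:ptolemaean-inequality-cr}. Your observation that equality in either part forces equality in both intermediate estimates, hence $\Re(\X_i)=|\X_i|$ for $i=1,2,3$ and so $\X_i>0$ (the cross-ratios cannot vanish for distinct points), is also correct and matches the paper's computation.

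The genuine gap is the ``standard reality criterion'' you invoke afterwards: it is false as stated. Reality of a Kor\'anyi--Reimann cross-ratio does not force the four points onto an $\R$-circle. For instance, if the four points lie on a $\C$-circle (the boundary of a complex line, e.g.\ the vertical axis of $\fH$, with lifts $(it,0,1)$), then every cross-ratio reduces to a classical real cross-ratio, so all three of $\X_1,\X_2,\X_3$ are real, yet the points lie on no $\R$-circle; more crudely, reality of a single cross-ratio is only a real-codimension-one condition on the four-dimensional cross-ratio variety, far weaker than confinement to an $\R$-circle. The statement that is actually true, and that the paper cites, is Proposition 5.12(ii) of \cite{PP}: if all three cross-ratios are real \emph{and positive} then the points lie on the boundary of a Lagrangian plane, i.e.\ on an $\R$-circle (positivity is exactly what excludes the $\C$-circle case). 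Since you have already established $\X_1,\X_2,\X_3>0$ at an equality, your argument is repaired simply by citing that proposition in place of your criterion. The converse likewise needs more than reality: one needs that four points on an $\R$-circle have all three cross-ratios positive (Proposition 5.14 of \cite{PP}, or your own normalization to the real axis, where each $|\X_i|^{1/2}$ is a ratio of Euclidean distances and $\X_i$ is the square of a real classical cross-ratio), after which Eqs. \ref{eq:cross1}--\ref{eq:cross2} give $(\X_1+\X_2-1)^2=4\X_1\X_2$ and hence one of the three equalities. Your handling of cases (1)--(3), by mapping the $\R$-circle to the real axis in $\fH$ via ${\rm PU}(2,1)$ (arranging that no point is $\infty$) and using the collinear Ptolemy identity, is a sound alternative to the paper's citation of Corollary 5.15 of \cite{PP}, since the cross-ratios are ${\rm PU}(2,1)$-invariant even though $d_K$ is not.
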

\begin{proof}
From the defining equations of $\fX$ we have
\begin{eqnarray*}
0&=& |\X_1|^2+|\X_2|^2-2\Re(\X_1)-2\Re(\X_2)+1-2|\X_1|^2\Re(\X_3)\\
&\ge & |\X_1|^2+|\X_2|^2-2|\X_1|-2|\X_2|+1-2|\X_1|^2|\X_3|\\
\text{from\;Eq.\ref{eq:cross1}}&=&|\X_1|^2+|\X_2|^2-2|\X_1|-2|\X_2|+1-2|\X_1||\X_2|\\
0&\ge&|\X_1|^2+|\X_2|^2-2\Re(\X_1)-2\Re(\X_2)+1-2|\X_1||\X_2|\\
&\ge&|\X_1|^2+|\X_2|^2-2|\X_1|-2|\X_2|+1-2|\X_1||\X_2|\\
&=&(|\X_1|+|\X_2|-1)^2-4|\X_1||\X_2|\\
&=&(|\X_1|+|\X_2|-2|\X_1|^{1/2}|\X_2|^{1/2}-1)\cdot (|\X_1|+|\X_2|+2|\X_1|^{1/2}|\X_2|^{1/2}-1)\\
&=&\left((|\X_1|^{1/2}-|\X_2|^{1/2})^2-1\right)\cdot\left((|\X_1|^{1/2}+|\X_2|^{1/2})^2-1\right),
\end{eqnarray*}
Therefore,
$$
(|\X_1|^{1/2}-|\X_2|^{1/2}-1)\cdot(|\X_1|^{1/2}-|\X_2|^{1/2}+1)\cdot(|\X_1|^{1/2}+|\X_2|^{1/2}-1)\cdot(|\X_1|^{1/2}+|\X_2|^{1/2}+1)\le 0
$$
and since $|\X_1|^{1/2}+|\X_2|^{1/2}+1>0$, this reduces to
$$
(|\X_1|^{1/2}-|\X_2|^{1/2}-1)\cdot(|\X_1|^{1/2}-|\X_2|^{1/2}+1)\cdot(|\X_1|^{1/2}+|\X_2|^{1/2}-1)\le 0.
$$
Suppose that $\left(|\X_1|^{1/2}-|\X_2|^{1/2}\right)^2> 1$; then applying triangle inequality we are reduced to a contradiction since we must also have $\left(|\X_1|^{1/2}-|\X_2|^{1/2}\right)^2> 1$.  Therefore, $\left(|\X_1|^{1/2}-|\X_2|^{1/2}\right)^2\le 1$ and $\left(|\X_1|^{1/2}-|\X_2|^{1/2}\right)^2\ge 1$ 
which proves the Ptolemaean inequality.

To prove Ptolemaeus' Theorem, suppose first that one of the inequalities holds as an equality. Then,
$$
(|\X_1|^{1/2}-|\X_2|^{1/2}-1)\cdot(|\X_1|^{1/2}-|\X_2|^{1/2}+1)\cdot(|\X_1|^{1/2}+|\X_2|^{1/2}-1)\cdot(|\X_1|^{1/2}+|\X_2|^{1/2}+1)= 0,
$$
which is equivalent to
$$
(|\X_1|-|\X_2|)^2= 2(|\X_1|+|\X_2|)-1.
$$
Since $
(|\X_1|-|\X_2|)^2\le 2\Re(\X_1+\X_2)-1,
$
we have $|\X_1|+|\X_2|\le \Re(\X_1+\X_2)$ and therefore
$$
0\ge\Re(\X_2)-|\X_2|\ge |\X_1|-\Re(\X_1)\ge 0.
$$
Thus $\X_1,\X_2$ are positive. Now from Equation \ref{eq:cross2} it follows
\begin{eqnarray*}
 2\X_1^2\Re(\X_3)&=&\X_1^2+\X_2^2-2\X_1-2\X_2+1\\
&=&\X_1^2+\X_2^2-(\X_1-\X_2)^2=2\X_1\X_2\\
\text{using \;Eq.}\; \ref{eq:cross1}\quad &=&2\X_1^2|\X_3|,
\end{eqnarray*}
and thus $\X_3>0$. By Proposition 5.12 (ii)  in \cite{PP} all points have to lie in a Lagrangian plane.

Conversely, if all points lie on an $\R$-circle, then we have equality in one of the inequalities, see Proposition 5.14 of \cite{PP}. The last statement is just Corollary 5.15 of \cite {PP}.
\end{proof}

\medskip

Let now $\fp=(p_1,p_2,p_3,p_4)$ a quadruple of distinct points in the Heisenberg group $\fH$. Inequalities \ref{eq:ptolemaean-inequality-cr} of Theorem \ref{thm:ptolemaean-ineq} can be written as
\begin{eqnarray}\label{eq:ineq-heis1}
&&
d_K(p_2,p_3)\cdot d_K(p_1,p_4)\le d_K(p_2,p_4)\cdot d_K(p_1,p_3)+d_K(p_1,p_2)\cdot d_K(p_3,p_4),\\
&&\label{eq:ineq-heis2}
d_K(p_1,p_3)\cdot d_K(p_2,p_4)\le d_K(p_1,p_2)\cdot d_K(p_3,p_4)+d_K(p_2,p_3)\cdot d_K(p_1,p_4),\\
&&\label{eq:ineq-heis3}
d_K(p_1,p_2)\cdot d_K(p_3,p_4)\le d_K(p_1,p_3)\cdot d_K(p_2,p_4)+d_K(p_2,p_3)\cdot d_K(p_1,p_4).
\end{eqnarray}

\medskip

As a corollary of the above discussion we derive the following.

\medskip

\begin{thm}\label{thm:ptolemaean-heis} {\bf (Ptolemaean inequality and  Ptolemaeus' Theorem in the Heisenberg group)}
 The Kor\'anyi--Cygan metric $d_K$ in the Heisenberg group $\fH$ satisfies the Ptolemaean inequality: for each quadruple of points $\fp=(p_1,p_2,p_3,p_4)$, inequalities \ref{eq:ineq-heis1}, \ref{eq:ineq-heis2} and \ref{eq:ineq-heis3} hold. Moreover, each of these inequalities hold as an equality if and only if all points lie in an $\R-$circle. Explicitly,
\begin{enumerate}
\item $d_K(p_2,p_3)\cdot d_K(p_1,p_4)=d_K(p_2,p_4)\cdot d_K(p_1,p_3)+d_K(p_1,p_2)\cdot d_K(p_3,p_4)$ if and only if all points lie in an $\R-$circle and $p_1$ and $p_4$ separate $p_2$ and $p_3$;
\item $d_K(p_1,p_3)\cdot d_K(p_2,p_4)=d_K(p_1,p_2)\cdot d_K(p_3,p_4)+d_K(p_2,p_3)\cdot d_K(p_1,p_4)$ if and only if all points lie in an $\R-$circle and $p_1$ and $p_3$ separate $p_2$ and $p_4$;
\item $d_K(p_1,p_2)\cdot d_K(p_3,p_4)=d_K(p_1,p_3)\cdot d_K(p_2,p_4)+d_K(p_2,p_3)\cdot d_K(p_1,p_4)$ if and only if all points lie in an $\R-$circle and $p_1$ and $p_2$ separate $p_3$ and $p_4$.
\end{enumerate}
\end{thm}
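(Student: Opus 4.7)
The strategy is to pass from Theorem \ref{thm:ptolemaean-ineq} to Theorem \ref{thm:ptolemaean-heis} by direct substitution, using the identity
$$|\X(p_1,p_2,p_3,p_4)|^{1/2}=\frac{d_K(p_4,p_2)\, d_K(p_3,p_1)}{d_K(p_4,p_1)\, d_K(p_3,p_2)}$$
recorded at the end of Section 2. First I would specialise this formula to the two cross--ratios that appear in the inequalities of Theorem \ref{thm:ptolemaean-ineq}, namely to $\X_1=\X(p_1,p_2,p_3,p_4)$ and to $\X_2=\X(p_1,p_3,p_2,p_4)$, obtaining
$$|\X_1|^{1/2}=\frac{d_K(p_1,p_3)\, d_K(p_2,p_4)}{d_K(p_1,p_4)\, d_K(p_2,p_3)},\qquad |\X_2|^{1/2}=\frac{d_K(p_1,p_2)\, d_K(p_3,p_4)}{d_K(p_1,p_4)\, d_K(p_2,p_3)}.$$

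The crucial structural observation is that both quantities share the same positive denominator $d_K(p_1,p_4)\, d_K(p_2,p_3)$. Clearing this denominator in each of the three cross--ratio inequalities $|\X_1|^{1/2}+|\X_2|^{1/2}\ge 1$, $|\X_1|^{1/2}-|\X_2|^{1/2}\le 1$, and $|\X_2|^{1/2}-|\X_1|^{1/2}\le 1$ from \eqref{eq:ptolemaean-inequality-cr} yields, respectively, the three Ptolemaean inequalities \ref{eq:ineq-heis1}, \ref{eq:ineq-heis2} and \ref{eq:ineq-heis3}. This is the entire substance of the inequality part of the statement.

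For the equality cases I would argue that each distance equality in (1)--(3) is, by the same denominator--clearing, equivalent to one of the three equalities in \eqref{eq:ptolemaean-inequality-cr}. By Theorem \ref{thm:ptolemaean-ineq}, each such cross--ratio equality in turn forces the four points to lie on a common $\R$--circle and determines the separation of the four points. Concretely, the matching I expect to write down is: equality in \ref{eq:ineq-heis1} corresponds to $|\X_1|^{1/2}+|\X_2|^{1/2}=1$, which by Theorem \ref{thm:ptolemaean-ineq}(3) is the case where $p_1,p_4$ separate $p_2,p_3$; equality in \ref{eq:ineq-heis2} corresponds to $|\X_1|^{1/2}-|\X_2|^{1/2}=1$ (case (1) of the earlier theorem, $p_1,p_3$ separating $p_2,p_4$); and equality in \ref{eq:ineq-heis3} corresponds to $|\X_2|^{1/2}-|\X_1|^{1/2}=1$ (case (2), $p_1,p_2$ separating $p_3,p_4$). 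The converse direction ($\R$--circle $\Rightarrow$ equality) is already built into Theorem \ref{thm:ptolemaean-ineq} via Proposition 5.14 of \cite{PP}.

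Since the cross--ratio version has already done all the geometric work, I do not anticipate any genuine difficulty. The only thing demanding attention is bookkeeping: tracking which pair of points plays the role of the common denominator $(p_1,p_4)$ and $(p_2,p_3)$, so that the three distance inequalities are matched against the correct sum-or-difference of $|\X_1|^{1/2},|\X_2|^{1/2}$ and then against the correct separation case. Getting this index calculus right is the single step where a careless permutation would scramble the statement.
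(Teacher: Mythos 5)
Your proposal is correct and is essentially the paper's own derivation: the paper obtains Theorem \ref{thm:ptolemaean-heis} as an immediate corollary of Theorem \ref{thm:ptolemaean-ineq} by rewriting $|\X_1|^{1/2}$ and $|\X_2|^{1/2}$ via the Kor\'anyi--Reimann formula over the common denominator $d_K(p_1,p_4)\,d_K(p_2,p_3)$, exactly as you do. Your matching of the three equality cases (sum with case (3), $|\X_1|^{1/2}-|\X_2|^{1/2}=1$ with case (1), $|\X_2|^{1/2}-|\X_1|^{1/2}=1$ with case (2)) agrees with the statement, so the bookkeeping you flagged is handled correctly.
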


\end{document}